\documentclass[11pt]{amsart}

\usepackage[margin=1in]{geometry}
\usepackage{amsmath, amssymb, amsthm}
\usepackage{mathrsfs}
\usepackage{array}
\usepackage{enumitem}
\usepackage[dvipsnames]{xcolor}
\usepackage{graphicx}
\usepackage{fancyhdr}
\usepackage{tikz}
\usepackage{caption}
\usepackage[colorlinks=true, urlcolor=NavyBlue, linkcolor=NavyBlue, citecolor=NavyBlue]{hyperref}
\usepackage[framemethod=default]{mdframed}
\usepackage{framed}
\usepackage{pict2e}
\usepackage{wrapfig}

\usepackage{wrapfig}
\usepackage{mdframed}
\usepackage{float}

\usepackage{tikz,lipsum,lmodern}
\usepackage[most]{tcolorbox}

\newtheorem{theorem}{Theorem}
\newtheorem*{theorem*}{Theorem}

\newtheorem{proposition}[theorem]{Proposition}

\newtheorem{remark}[theorem]{Remark}

\theoremstyle{definition}

\newtheorem*{definition*}{Definition}

\newcommand{\Q}{\mathbb Q}

\newcommand{\Z}{\mathbb Z}

\newcommand{\bmat}[1]{\begin{bmatrix} #1 \end{bmatrix}}

\newcommand{\red}[1]{\textcolor{OrangeRed}{#1}}

\makeatletter
\renewcommand{\@seccntformat}[1]{}
\makeatother

\begin{document}
\title{Obstructing unknotting number two for some alternating knots}

\author[]{Justin Gebel}
\address{Department of Mathematics and Statistics 
    McMaster University, Hamilton, ON, L8S 4L8}
\email{gebelj@mcmaster.ca}

\author[]{William Prangley}
\address{Department of Mathematics and Statistics 
    McMaster University, Hamilton, ON, L8S 4L8}
\email{pranglew@mcmaster.ca}

\thanks{Justin Gebel was partially supported by an NSERC Canada Graduate Research Scholarship - Master's grant. William Prangley was partially supported by an Ontario Graduate Scholarship grant. This work was made possible by the facilities of the Shared Hierarchical 
Academic Research Computing Network (SHARCNET:www.sharcnet.ca) and Digital Research Alliance of Canada (https://alliancecan.ca/en).}

\begin{abstract}
    We apply the $d$-invariants of Ozsv\'ath-Szab\'o and a theorem due to Brendan Owens to determine the unknotting numbers of 194 alternating knots with 13 or fewer crossings.
\end{abstract}

\maketitle

\noindent\textbf{Introduction.} The unknotting number $u(K)$ of a knot $K$ is the minimal number of crossing changes needed to turn $K$ into the unknot, taken across all diagrams of $K$. The interest in computing unknotting numbers has recently been rekindled by the landmark papers \cite{BHog} and \cite{BH} from Brittenham and Hermiller, which exhibit and explore the non-additivity of $u(K)$ under connected sum. Following their terminology, if two knots $K$ and $K'$ are such that $u(K \# K')<u(K)+u(K')$, then the knots $K$ and $K'$ are said to be \textbf{symbionts} for each other. Whether there exists a knot which does not possess a symbiont is unknown, and provides a deeply interesting open problem. This project grew out of a failed attempt to show that $u(10_6)=3$, which would give the trefoil knot a symbiont and demonstrate this property for all torus knots (see \cite[Theorem 1.3, Corollary 4.2]{BH}). Although $u(10_6)$ is still unknown, we are, however, pleased to present 194 new unknotting numbers, which were computed using techniques of Owens and Ozsv\'ath-Szab\'o.

\begin{theorem}
    The following 11 and 12 crossing knots have $u(K)=3$: 11a63, 11a64, 11a144, 11a299, 11a320, 11a329, 12a35, 12a37, 12a47, 12a75, 12a97, 12a152, 12a231, 12a254, 12a269, 12a289, 12a320, 12a331, 12a421, 12a443, 12a610, 12a653, 12a661, 12a763, 12a764, 12a795, 12a796, 12a798, 12a800, 12a812, 12a880, 12a938, 12a967, 12a974, 12a978, 12a983, 12a996.   \label{thm:newuk}
\end{theorem}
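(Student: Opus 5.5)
The plan is to prove two inequalities for each knot $K$ on the list. For the upper bound $u(K)\le 3$, I will exhibit, for each knot, a set of three crossings in a minimal (alternating) diagram whose change yields the unknot. These are the values recorded as upper bounds in KnotInfo, and I would verify each one by computer, for instance by checking with SnapPy or a similar tool that the resulting diagram is trivial. The real content is the lower bound $u(K)\ge 3$. Each of these knots already has known lower bound $2$, from the signature, Nakanishi's bound or earlier obstructions. So it suffices to rule out $u(K)=2$.

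To rule out $u(K)=2$, I would use Owens' theorem, which extends the Montesinos trick to two crossing changes. If $u(K)\le 2$, then the branched double cover $\Sigma(K)$ bounds a smooth positive- or negative-definite $4$-manifold $X$ obtained by attaching two $2$-handles, with intersection form $Q$. Here $Q$ is a $2\times 2$ form with $|\det Q| = \det K$, and its entries are constrained by the crossing-change data: roughly, $Q$ has even off-diagonal entry and diagonal entries determined up to sign. In particular, $Q$ embeds as a sublattice of a standard diagonal lattice coming from a closed-up definite manifold. Ozsv\'ath--Szab\'o's inequality then gives
\[
c_1(\mathfrak s)^2 + 2 \ge 4\, d(\Sigma(K),\mathfrak t)
\]
for every characteristic covector restricting to $\mathfrak t$, with suitable sign conventions. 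Together with the lattice data, this forces the $d$-invariants of $\Sigma(K)$ to coincide, up to sign and an affine identification of $H^2(\Sigma(K))$ with $\mathrm{coker}(Q)$, with the values $\max\,(c^2+2)/4$ computed from $Q$. Owens states this in the form: $\Sigma(K)$ has $d$-invariants bounded by, and congruent modulo $2$ to, those of a form of the above type.

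For each knot I will carry out the following steps.
\begin{enumerate}
\item Compute $\det K$ and the $d$-invariants of $\Sigma(K)$. Since $K$ is alternating, $\Sigma(K)$ bounds the Goeritz (sharp) plumbing, so the $d$-invariants are computed exactly by maximizing $(c^2+\text{rank})/4$ over characteristic covectors of the Goeritz matrix. This uses Ozsv\'ath--Szab\'o's result for alternating knots.
\item Enumerate all candidate $2\times 2$ forms $Q$ of both signs with the required parity and determinant $\pm\det K$. There are finitely many up to equivalence, since reduced binary forms of fixed determinant form a finite list.
\item For each $Q$ and each group isomorphism $H^2(\Sigma(K))\cong \mathrm{coker}\,Q$ compatible with the linking form, check whether the $d$-invariant inequalities and congruences can all hold.
\end{enumerate}
If every candidate fails, then $u(K)\ne 2$, and hence $u(K)=3$.

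The main obstacle is step (iii). The matching runs over all spin$^c$ structures, and the affine identifications must respect both the linking form and conjugation symmetry. When $H_1(\Sigma(K))$ is non-cyclic or has many automorphisms, the search grows, and a naive check may wrongly leave some identification standing. I would implement this as a computer search: first reduce using the linking form, which already pins down the identification up to automorphism, then compare the sorted multisets of $d$-invariants, and only fall back to exhaustive matching when needed. The theorem's list would be exactly the knots where all forms are excluded.
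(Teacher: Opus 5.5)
Your overall architecture matches the paper's: an upper bound from a three-crossing unknotting, a lower bound by excluding $u(K)=2$ with Owens' theorem and $d$-invariants, and then a search over identifications. However, the key step rests on a misstatement of Theorem~\ref{thm:owens}, so the computation you describe would not prove anything as written.

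\textbf{First gap: the hypothesis of Owens' theorem.} You never check it. Owens does not show that $u(K)\le 2$ gives a definite filling. He needs an unknotting by $p$ positive and $n$ negative crossing changes with $n=\sigma(K)/2$.

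The missing idea is the signature argument. These knots have $\sigma(K)=4$ after mirroring; this is also where the known bound $u\ge 2$ comes from. Changing a positive crossing changes $\sigma$ by $0$ or $+2$, and changing a negative one by $0$ or $-2$. So a two-crossing unknotting must have $p=0$ and $n=2$.

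That is what makes Theorem~\ref{thm:owens} applicable, and it also pins down the form: positive definite, with both $m_1,m_2$ even. Your ``positive- or negative-definite'', ``both signs'' and ``required parity'' leave this open. When $n\neq\sigma(K)/2$ (e.g.\ $\sigma=2$ with two negative changes), Owens' theorem gives no definite filling at all. That is exactly why the paper only obtains a sign restriction in Theorem~\ref{thm: writhe-restriction}.

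\textbf{Second gap: the form is not binary.} For two crossing changes, the Montesinos trick gives $\Sigma(K)$ as half-integral surgery on a two-component link. Each coefficient $m_i-\tfrac12$ must be realized by an $m_i$-framed component together with a $2$-framed meridian. So the filling has rank $4$, with form
\[
Q=\bmat{m_1&1&a&0\\1&2&0&0\\a&0&m_2&1\\0&0&1&2},\qquad \det Q=(2m_1-1)(2m_2-1)-4a^2=\det K,\qquad 0\le a<m_1\le m_2 .
\]
The binary matrix $\bmat{2m_1-1&2a\\2a&2m_2-1}$ you have in mind is twice the Schur complement of the $2$-framed block. It presents the right group, but it is not the intersection form of a $4$-manifold bounded by $\Sigma(K)$. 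Hence the values $\max(c^2+2)/4$ computed from it are not justified bounds. A filling built from two $2$-handles is what one gets for $u=1$ (Ozsv\'ath--Szab\'o), not $u=2$.

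\textbf{A smaller point.} Nothing forces the $d$-invariants to coincide with those of $Q$. The condition is $m_Q(g)\ge d(\Sigma(K),\phi(g))$, with congruence mod $2$, for some group isomorphism $\phi:\Gamma_Q\to\mathrm{Spin}^c(\Sigma(K))$.

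With these corrections your steps become the paper's proof:
\begin{enumerate}
\item enumerate the finitely many admissible $(m_1,m_2,a)$ with both $m_i$ even (only two for $12a796$);
\item compute $m_Q$ for each;
\item rule out every $\phi$, which in the cyclic case is multiplication by a unit of $\Z_{\det K}$.
\end{enumerate}
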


\begin{theorem}
    The following 13 crossing knots have $u(K) = 3$: 13a16, 13a48, 13a60, 13a65, 13a82, 13a150, 13a154, 13a170, 13a172, 13a209, 13a217, 13a231, 13a251, 13a419, 13a434, 13a438, 13a443, 13a571, 13a576, 13a584, 13a586, 13a589, 13a644, 13a645, 13a653, 13a661, 13a666, 13a670, 13a796, 13a859, 13a901, 13a933, 13a983, 13a1043, 13a1051, 13a1062, 13a1137, 13a1140, 13a1147, 13a1182, 13a1295, 13a1329, 13a1331, 13a1339, 13a1403, 13a1413, 13a1440, 13a1465, 13a1469, 13a1472, 13a1475, 13a1491, 13a1495, 13a1499, 13a1546, 13a1547, 13a1552, 13a1554, 13a1580, 13a1615, 13a1616, 13a1618, 13a1643, 13a1943, 13a2037, 13a2125, 13a2185, 13a2269, 13a2290, 13a2291, 13a2295, 13a2310, 13a2317, 13a2334, 13a2359, 13a2360, 13a2365, 13a2373, 13a2379, 13a2399, 13a2412, 13a2434, 13a2478, 13a2502, 13a2520, 13a2697, 13a2698, 13a2761, 13a2766, 13a2777, 13a2784, 13a2785, 13a2805, 13a2821, 13a2832, 13a2834, 13a2843, 13a2859, 13a2865, 13a2888, 13a2911, 13a3065, 13a3089, 13a3100, 13a3310, 13a3531, 13a3601, 13a3691, 13a3698, 13a3704, 13a3705, 13a3716, 13a3784, 13a3902, 13a3963, 13a3975, 13a3981, 13a4057, 13a4080, 13a4091, 13a4093, 13a4101, 13a4104, 13a4106, 13a4113, 13a4156, 13a4185, 13a4191, 13a4198, 13a4200, 13a4207, 13a4216, 13a4232, 13a4293, 13a4320, 13a4322, 13a4385, 13a4390, 13a4392, 13a4396, 13a4461, 13a4465, 13a4508, 13a4510, 13a4556, 13a4565, 13a4571, 13a4579, 13a4580, 13a4624, 13a4655, 13a4756, 13a4796, 13a4810, 13a4820, 13a4835, 13a4836.
    \label{thm:newuk13}
\end{theorem}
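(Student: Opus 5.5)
For each knot $K$ in the list of Theorem~\ref{thm:newuk13}, the upper bound $u(K)\le 3$ will be checked directly. We exhibit (or take from existing tabulations such as KnotInfo) a diagram of $K$ in which three crossing changes produce the unknot. All the work is in the lower bound $u(K)\ge 3$. Since these knots are alternating and nontrivial, $u(K)\ge 1$ is automatic. In most cases the classical bounds (signature, Nakanishi index) or earlier Heegaard Floer obstructions already give $u(K)\ge 2$; where they do not, we run the argument below with $u=1$ (Ozsváth--Szabó's unknotting-number-one obstruction via $d$-invariants). So the crux is ruling out $u(K)=2$.

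To rule out $u(K)=2$ we use Owens' theorem. Suppose $K$ can be unknotted by changing $p$ positive and $q$ negative crossings with $p+q=2$. Then the branched double cover $\Sigma(K)$ bounds a smooth negative-definite 4-manifold $X$ with $H_1(X)$ torsion-free and intersection form of rank $2$. In suitable bases this form is represented by a $2\times 2$ matrix $Q$ with $|\det Q|=\det(K)$, and its entries are constrained by Owens' parity and sign conditions (the diagonal entries are related to $\pm 2$ times the signs of the crossings changed). The signature of $K$ determines $(p,q)$ up to the allowed possibilities, and hence which sign pattern $Q$ must have. We then enumerate, up to integral congruence, all candidate $2\times 2$ forms $Q=\bmat{a & b\\ b & c}$ of the required type with $ac-b^2=\pm\det(K)$. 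Reduction theory for binary quadratic forms makes this list finite and explicit.

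For each candidate $Q$, Owens' theorem (combined with Ozsváth--Szabó's inequality for $d$-invariants of boundaries of definite manifolds) gives a necessary condition. There must be a group isomorphism $\phi\colon \operatorname{coker} Q\to H^2(\Sigma(K))$, respecting the natural identification of spin$^c$ structures, such that for every spin$^c$ structure $\mathfrak t$ on $\Sigma(K)$
\[
\max_{\xi}\ \frac{\xi^T Q^{-1}\xi + 2}{4} \;\le\; d(\Sigma(K),\mathfrak t) \pmod{\text{appropriate sign conventions}},
\]
with congruence mod $2$. Here the maximum is over characteristic covectors $\xi$ restricting to $\mathfrak t$. For alternating knots the $d$-invariants of $\Sigma(K)$ are computable combinatorially from the Goeritz matrix, since $\Sigma(K)$ bounds a sharp definite plumbing-like manifold (Ozsváth--Szabó). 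We compute them from a Goeritz matrix of a reduced alternating diagram. We then verify, by exhaustive computer search over all candidate $Q$ and all isomorphisms $\phi$ (affine identifications of the cyclic or finite group $H^2$ with $\operatorname{coker}Q$ matching the linking forms), that no choice satisfies both the congruence and the inequality. This shows $u(K)\ne 2$, so $u(K)=3$.

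The main obstacle is the computational step. First, it requires rigorous computation of $d$-invariants for every spin$^c$ structure: for determinants in the hundreds this means minimizing over characteristic vectors of a Goeritz lattice. Second, it requires controlling the enumeration of candidate forms and isomorphisms, where the linking-form compatibility must be used early to prune. I would first filter the candidates using the linking form and the mod~$2$ congruence, which is cheap. The full inequality check is then applied only to the survivors; this is where the SHARCNET computations presumably enter.
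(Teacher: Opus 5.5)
There is a genuine gap: you misstate what Owens' theorem provides, and the whole computation rests on that. Via the Montesinos trick, two crossing changes present $\Sigma(K)$ as surgery on a two-component link in $S^3$ with \emph{half-integral} coefficients $m_1-\tfrac12$, $m_2-\tfrac12$ and linking number $a$. To get a 2-handlebody, each half-integral coefficient has to be replaced by the chain $[m_i,2]$, i.e.\ an extra meridian with framing $2$. So the definite manifold has $b_2=4$, not $2$, and its form is forced to be
\[
Q=\bmat{m_1&1&a&0\\1&2&0&0\\a&0&m_2&1\\0&0&1&2},\qquad \det Q=(2m_1-1)(2m_2-1)-4a^2=\det K,
\]
with $0\le a<m_1\le m_2$ and exactly $n$ of $m_1,m_2$ even.

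The binary matrix $\bmat{2m_1-1&2a\\2a&2m_2-1}$ does have determinant $\det K$, but it is only twice the half-integral linking matrix. There is no reason $\Sigma(K)$ bounds a definite manifold with that form, so no Ozsv\'ath--Szab\'o inequality is available for it. Correspondingly, your constant $+2$ should be the rank $4$; in the paper's convention the condition is $\min_E (E^TQ^{-1}E-4)/4\ge d(\Sigma(K),\phi(g))$, with congruence mod $2$. Searching over all binary forms of determinant $\pm\det K$ up to congruence therefore tests a condition that $u(K)=2$ does not imply, and its failure would prove nothing. The correct finite search is over integer solutions of $(2m_1-1)(2m_2-1)=\det K+4a^2$ under the constraints above, which force $4a+1\le\det K$. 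The relevant constraint is the parity of the $m_i$, not a sign pattern of $\pm2$ on the diagonal.

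You also gloss over which crossing configurations the theorem covers. Owens' definite filling is not available for an arbitrary split $p+q=2$. As stated, the theorem requires the number $n$ of negative crossings changed to equal $\sigma(K)/2$, with $\sigma(K)\ge0$ after mirroring. What lets the argument exclude $u(K)=2$ entirely for these knots is that each has $\sigma(K)=4$. By the behaviour of $\sigma$ under crossing changes, the only possible configuration is then $n=2$, $p=0$, which forces both $m_i$ even. This also gives $u(K)\ge 2$ immediately, so no $u=1$ argument is needed. For signature-$2$ knots the configuration $n=2$, $p=0$ is not covered, which is exactly why the paper only obtains a sign restriction in Theorem~\ref{thm: writhe-restriction}.

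With these two corrections, the rest of your plan matches what the paper does:
\begin{itemize}
\item compute the $d$-invariants from a Goeritz matrix, or from Proposition~\ref{prop:ozsz} for two-bridge knots;
\item compare $m_Q$ with $d$ over all isomorphisms $\Gamma_Q\to\text{Spin}^c(\Sigma(K))$, which in the cyclic case are multiplications by units, up to sign.
\end{itemize}
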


We have also improved lower bounds on $u(K)$ for 30 knots, as below;

\begin{theorem}
    The following knots have $u(K)\in\{3,4\}$: 11a354, 12a156, 12a392, 12a1037, 12a1097, 12a1113, 13a2129, 13a2767, 13a3176, 13a3595, 13a4088, 13a4196, 13a4214, 13a4233, 13a4304, 13a4454, 13a4493, 13a4554, 13a4773, 13a4781, 13a4790, 13a4813, 13a4840, 13a4842, 13a4851, 13a4853, 13a4855, 13a4869, 13a4871, 13a4872.
    \label{thm:newukbound}
\end{theorem}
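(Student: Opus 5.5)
For each knot $K$ listed in Theorem~\ref{thm:newukbound}, the upper bound $u(K)\le 4$ comes from exhibiting, in a minimal alternating diagram, a set of at most four crossings whose simultaneous change produces the unknot. These sets are found by a computer search over subsets of crossings and can be certified by any standard unknot recognizer. The real content is the lower bound $u(K)\ge 3$. Here we cannot use the signature or the Nakanishi index, since these presumably give only $u(K)\ge 2$ for these knots. We also cannot use Lickorish-type linking form arguments, which we assume have already been exhausted in the literature. Instead we rule out $u(K)=2$ with Owens' obstruction.

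Owens' theorem says the following. Suppose $K$ can be unknotted by changing $p$ positive and $n$ negative crossings, with $p+n=2$. Then $\Sigma(K)$ bounds a smooth negative-definite $4$-manifold $X$ with $H_1(X)=0$, obtained by attaching $2$-handles to $B^4$ along a framed link built from the unknotting crossings. Its intersection form $Q_X$ has rank $2$ and $|\det Q_X|=\det K$. The form $Q_X$ must also satisfy two conditions: the linking form of $\Sigma(K)$ agrees with $-Q_X^{-1}$ on $\mathbb Z^2/Q_X\mathbb Z^2$, and, through the Ozsv\'ath--Szab\'o inequality $c_1(\mathfrak s)^2+b_2(X)\le 4d(\Sigma(K),\mathfrak s|_{\Sigma(K)})$ together with its parity condition, the $d$-invariants of $\Sigma(K)$ are bounded by the maximal characteristic squares of $Q_X$. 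Owens also specifies the form of $Q_X$: for unknotting number two it is a $2\times 2$ matrix with diagonal entries $-2\cdot$ (even) $-1$ type constraints related to the signs of the crossings, and with off-diagonal entries bounded. This leaves finitely many candidate matrices with $\det = \det K$.

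The procedure for each $K$ in Theorem~\ref{thm:newukbound} then has three steps.
\begin{enumerate}
\item Compute $\det K$ and the sign pattern $(p,n)$ forced by the signature $\sigma(K)$. When $|\sigma(K)|=4$ both changes must have the same sign; otherwise both patterns must be considered.
\item Compute the $d$-invariants $d(\Sigma(K),\mathfrak t)$ for all $\mathfrak t$. Since $K$ is alternating, $\Sigma(K)$ bounds the negative-definite Goeritz plumbing, so Ozsv\'ath--Szab\'o's formula computes $d$ as a maximum of characteristic squares over the Goeritz lattice. This is a finite lattice computation.
\item Enumerate all admissible $2\times2$ forms $Q$ with $|\det Q|=\det K$ and the required diagonal structure. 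For each $Q$ and each affine identification $\mathrm{coker}\,Q\cong H_1(\Sigma(K))\cong H^2$ respecting spin$^c$ structures and linking forms, check the inequalities $\max_{c}\frac{c^2+2}{4}\le d(\Sigma(K),\mathfrak t)$ with $d \equiv (c^2+2)/4 \pmod 2$. If every candidate fails, then $u(K)\ne 2$. The cases $u(K)\le 1$ are already excluded by known results (for example Lickorish or Owens for unknotting number one, which are presumably already recorded).
\end{enumerate}

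The main obstacle is step 3. We must range over all bijections between spin$^c$ structures on $\Sigma(K)$ and characteristic covectors of $Q$ modulo $Q\mathbb Z^2$, and these must be compatible with conjugation and with the group structure. When $H_1(\Sigma(K))$ is noncyclic, or has many automorphisms, the search is large, so I would first reduce by the linking form: only isomorphisms of linking forms need checking. Beyond that I would rely on computer enumeration. Some knots from Theorems~\ref{thm:newuk} and~\ref{thm:newuk13} are presumably decided because a matching upper bound of $3$ is found. The knots in Theorem~\ref{thm:newukbound} are exactly those for which the obstruction succeeds but no three-crossing unknotting set was found, which is why only $u(K)\in\{3,4\}$ is claimed.
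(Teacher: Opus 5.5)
Your overall plan matches the paper's: the signature gives $u(K)\ge 2$, the bound $u(K)\le 4$ is taken as known, and $u(K)=2$ is ruled out with Owens' theorem, using $d$-invariants from a Goeritz matrix (or the lens space recursion for two-bridge knots). The gap is in step 3: the test you propose there is not Owens' theorem, and it does not follow from $u(K)=2$.

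Via the Montesinos trick, two crossing changes present $\Sigma(K)$ as surgery on a two-component link with \emph{half-integral} framings $m_i-\tfrac12$. To get a four-manifold, each such component is replaced by an $m_i$-framed component plus a $2$-framed meridian. The definite form therefore has rank $2(p+n)=4$:
\[
Q=\begin{bmatrix} m_1&1&a&0\\ 1&2&0&0\\ a&0&m_2&1\\ 0&0&1&2\end{bmatrix},\qquad \det Q=(2m_1-1)(2m_2-1)-4a^2=\det K,
\]
with $0\le a<m_1\le m_2$ and exactly $n$ of $m_1,m_2$ even. The hypothesis $u(K)=2$ produces no rank-two definite form of determinant $\det K$ bounded by $\Sigma(K)$.

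The natural rank-two candidate $\begin{bmatrix}2m_1-1&2a\\ 2a&2m_2-1\end{bmatrix}$ has the right determinant, and even the right cokernel. But it is twice the half-integral framing matrix, not an intersection form, so the Ozsv\'ath--Szab\'o inequality does not apply to it. Its characteristic covectors, and the shift by $b_2=2$ instead of $4$, give different numbers. For example, with $\det K=57$, $a=0$, $m_1=2$, $m_2=10$, the class of $0$ has $m_Q=-1$ for the correct $Q$. The rank-two analogue $\min\{(c^TA^{-1}c-2)/4\}$ for $A=\mathrm{diag}(3,19)$ gives $5$ instead. So a failure of your test would prove nothing.

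What must actually be checked is this. Enumerate every $Q$ from a solution of $(2m_1-1)(2m_2-1)=\det K+4a^2$ with both $m_i$ even. For each, show there is no group isomorphism $\phi:\Gamma_Q\to\mathrm{Spin}^c(\Sigma(K))$ with $m_Q(g)=\min\{(E^TQ^{-1}E-4)/4\}\ge d(\Sigma(K),\phi(g))$ and equality mod $2$. When $\Gamma_Q$ is cyclic, the candidates for $\phi$ are multiplication by units, halved by the symmetry of $d$.

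You have also dropped the hypothesis $n=\sigma(K)/2$ from Owens' theorem, which is what makes the four-manifold definite. Your step 1 says that when $|\sigma(K)|\ne 4$ ``both patterns must be considered.'' The theorem cannot obstruct a sign pattern with $n\ne\sigma(K)/2$. For $\sigma(K)=2$ it only excludes $(p,n)=(1,1)$, which is why the paper's Theorem~\ref{thm: writhe-restriction} yields only a sign restriction.

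The present statement goes through only because every listed knot has $|\sigma(K)|=4$. After mirroring, $u(K)=2$ then forces $(p,n)=(0,2)$ with $n=\sigma(K)/2$, so a single application of the theorem with both $m_i$ even suffices; you should verify this signature condition for each knot. Minor point: the bounds $2\le u(K)\le 4$ are simply taken from KnotInfo, and you should not assume that four unknotting crossings can be found in a minimal diagram.
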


We may also add a sign restriction on the following knots having unknotting number 2. 

\begin{theorem}
    If the following knots do have $u(K) = 2$, then they must be unknotted with two crossings of the same sign: 11a362, 11a363, 12a86, 12a150, 12a153, 12a161, 12a372, 12a376, 12a379, 12a414, 12a767, 12a989, 12a1103, 12a1118, 13a17, 13a51, 13a159, 13a175, 13a285, 13a338, 13a339, 13a365, 13a382, 13a473, 13a619, 13a800, 13a862, 13a998, 13a1150, 13a1259, 13a1332, 13a1361, 13a1406, 13a1476, 13a1542, 13a1548, 13a1553, 13a1581, 13a1609, 13a1788, 13a1868, 13a1887, 13a2228, 13a2327, 13a2328, 13a2386, 13a2396, 13a2422, 13a2436, 13a2451, 13a2711, 13a2795, 13a2822, 13a2882, 13a2886, 13a2897, 13a2917, 13a3174, 13a3175, 13a3274, 13a3451, 13a3452, 13a3496, 13a3597, 13a3692, 13a3701, 13a3706, 13a3808, 13a3813, 13a4072, 13a4102, 13a4154, 13a4208, 13a4278, 13a4337, 13a4338, 13a4401, 13a4515, 13a4705, 13a4843, 13a4856, 13a4873.
    \label{thm: writhe-restriction}
\end{theorem}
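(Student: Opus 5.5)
The plan is to apply Owens' obstruction \cite{Owens} in its signed form. Suppose a knot $K$ can be unknotted by changing $p$ positive and $n$ negative crossings, with $p+n=2$. Then the branched double cover $\Sigma(K)$ bounds a smooth negative-definite $4$-manifold $X$ with $b_2(X)=2$ and $H_1(X)=0$. We get $X$ from the branched double cover of $B^4$ over a surface obtained from the crossing changes: each crossing change contributes a $2$-handle. The intersection form of $X$ is presented by a $2\times 2$ matrix $Q$ with $\det Q = \pm\det K$. The diagonal entries of $Q$ are $\pm 2$ type contributions determined by the signs of the changed crossings, shifted according to Owens' convention. The off-diagonal entry is an arbitrary integer subject to the determinant constraint. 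Moreover $(Q,\cdot)$ is half-integral or integral according to the parity conventions, and the two sign patterns (same sign versus opposite sign) give different admissible families of matrices. Alternatively, one uses $-\Sigma(K)$, applying Donaldson-type restrictions on whichever orientation is negative definite.

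Next, Ozsv\'ath--Szab\'o give the inequality
\[
c_1(\mathfrak s)^2 + b_2(X) \le 4\, d(\Sigma(K),\mathfrak t) \quad\text{for every } \mathfrak s \text{ restricting to } \mathfrak t .
\]
Together with Owens' refinement, this shows that for each $\mathfrak t\in \mathrm{Spin}^c(\Sigma(K))\cong H^2(\Sigma(K))$ the maximum of $(c^TQ^{-1}c+2)/4$ over characteristic covectors $c$ in the coset corresponding to $\mathfrak t$ equals $d(\Sigma(K),\mathfrak t)$. The equality holds up to an identification of $\operatorname{coker}Q$ with $H^2(\Sigma(K))$ that respects the linking form, and it also gives an extra congruence from the Montesinos trick. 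For alternating $K$, $\Sigma(K)$ bounds the plumbing or Goeritz lattice, and the $d$-invariants are computed combinatorially from a Goeritz matrix $G$ by maximizing over characteristic vectors, via Ozsv\'ath--Szab\'o's result for alternating links. So the steps for each listed knot are as follows.
\begin{enumerate}
\item Compute $\det K$ and the full $d$-invariant function on $H^2(\Sigma(K))$ from $G$.
\item Enumerate all $2\times 2$ matrices $Q$ of the \emph{opposite-sign} shape with $|\det Q|=\det K$. Up to change of basis this is a finite list, since the reduced off-diagonal entries are bounded by the determinant.
\item For each such $Q$ and each group isomorphism $\operatorname{coker}Q\to H^2(\Sigma(K))$ compatible with linking forms, check whether the $d$-invariants computed from $Q$ match the actual $d$-invariants. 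If no $Q$ and no identification give a match, the opposite-sign unknotting is ruled out.
\end{enumerate}
Since the knots in question were presumably already known to be consistent with the same-sign case (and not otherwise excluded), the conclusion is exactly the statement of the theorem. Because $d$-invariants flip sign under orientation reversal, the mirror image is handled by the same computation. Running the same test on the same-sign family and finding no obstruction there is consistent with this theorem being stated rather than Theorem~\ref{thm:newuk}.

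The main obstacle is step (3): searching over all linking-form-preserving identifications between $\operatorname{coker}Q$ and $H^2(\Sigma(K))$. This is expensive when $H^2$ is non-cyclic or has many automorphisms. I would first reduce by matching the multiset of $d$-invariants, which is identification-independent. If that already fails for every opposite-sign $Q$, the knot is obstructed without further search. Only when the multisets agree would I fall back on the full search over automorphisms, organized by first matching the $d$-value of the spin structure(s) and then propagating. Since the knot list is large, all of this would be implemented computationally, as the acknowledgement of computing resources suggests.
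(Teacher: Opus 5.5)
Your strategy matches the paper's: obstruct the opposite-sign configuration with Owens' theorem, using $d$-invariants computed from a Goeritz lattice. But the test you propose to run is not valid, and the setup around it has concrete errors.

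\textbf{The equality claim is wrong, so your step (3) is unsound.} You assert that for each $\mathfrak t$ the maximum of $(c^TQ^{-1}c+b_2(X))/4$ over characteristic covectors in the coset \emph{equals} $d(\Sigma(K),\mathfrak t)$. Equality holds for sharp manifolds such as the Goeritz lattice of an alternating diagram, which is why your step (1) is fine. Nothing makes the 4-manifold built from the unknotting data sharp. The Ozsv\'ath--Szab\'o inequality you quote gives only an inequality, which is exactly what Owens' theorem packages: $m_Q(g)\ge d(\Sigma(K),\phi(g))$ and $m_Q(g)\equiv d(\Sigma(K),\phi(g)) \pmod 2$, in the paper's positive-definite convention. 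So checking whether the $d$-invariants from $Q$ ``match'', and your multiset pre-filter, obstruct nothing: a mismatch contradicts nothing. The paper's example shows permitted mismatches really occur. For 12a796, with $Q_2$ and $\phi$ given by multiplication by $10$, one has $m_{Q_2}(6)=\tfrac{17}{19}$ while $d(60\equiv 3)=-\tfrac{21}{19}$. These differ by exactly $2$, which is consistent with Owens' theorem, yet your test would reject it. That isomorphism is genuinely killed only at $g=20$, where $m_{Q_2}(20)=-\tfrac{35}{57}<\tfrac{79}{57}=d(29)$. The correct test is: for every admissible $Q$ and every isomorphism $\phi$, exhibit some $g$ with $m_Q(g)<d(\phi(g))$, or with the mod-$2$ congruence failing.

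\textbf{You never use the signature, and your definite manifold has the wrong shape.}
\begin{itemize}
\item Owens' theorem, i.e.\ definiteness of the manifold built from the unknotting data, requires $n=\sigma(K)/2$, with $\sigma(K)\ge 0$ after mirroring. An arbitrary $p+n=2$ unknotting does not give a definite manifold.
\item This, not a failed search, is why the same-sign case survives: for these knots there is no ``same-sign family'' to test.
\item The missing step is that the listed knots have $\sigma=2$. Since changing a positive crossing leaves $\sigma$ fixed or raises it by $2$, $(p,n)=(2,0)$ is impossible. $(p,n)=(1,1)$ is exactly the configuration with $n=\sigma/2$, and obstructing it leaves only $(0,2)$.
\item The manifold does not have $b_2=2$ with a $2\times 2$ form. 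The Montesinos trick gives half-integral surgery on a two-component link, which Owens converts to integral surgery on four components.
\item The family to enumerate is therefore the $4\times4$ matrices of Theorem~\ref{thm:owens} with $\det Q=\det K$, $0\le a<m_1\le m_2$, and exactly one of $m_1,m_2$ even. Equivalently, these are the factorizations $(2m_1-1)(2m_2-1)=\det K+4a^2$ with that parity constraint.
\item Restricting $\phi$ to linking-form-compatible isomorphisms is an extra claim you would have to justify; the paper checks all group isomorphisms.
\end{itemize}
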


Our computations are supplied at \cite{Data} for the reader to verify, but below we give all the details necessary to reproduce Theorems \ref{thm:newuk} through \ref{thm: writhe-restriction}. 

\subsection*{Conventions} We denote the double branch cover of a knot $K$ by $\Sigma(K)$. As per \cite[Satz 6]{schubert1956knoten}, we denote the two-bridge knot corresponding to $L(p,q)$ by $K(p,q)$. In computing the correction terms we use the convention of \cite{Owens} to orient $L(p,q)$, that is, the opposite of \cite{ozsz}. Note that Theorem \ref{thm:owens} only applies to knots with non-negative signature, but this can always be arranged by taking mirror images. Since for any knot $u(K)=u(K^m)$, we suppose that $\sigma(K)\geq0$ always.

\subsection*{Acknowledgments}
Both authors are extremely grateful to Brendan Owens for providing immense feedback and a Maple notebook to compare results. We also thank him for his incredible hospitality and insights during our brief visit to the University of Glasgow. We would also like to thank Hans Boden for his unwavering support and invaluable guidance in compiling this note.

\hfill

\noindent\textbf{Background.} Currently on KnotInfo \cite{knotinfo}, the knots listed in Theorems \ref{thm:newuk} and \ref{thm:newuk13} all have $u(K)\in\{2,3\}$, and the knots listed in Theorem \ref{thm:newukbound} all have $u(K)\in\{2,3,4\}$. For each of these, we use Theorem 1 in \cite{Owens} to obstruct $u(K)=2$, and hence conclude that $u(K)=3$ (or $u(K)\in\{3,4\}$). The knots listed in Theorem \ref{thm: writhe-restriction} all have $u(K) \geq 2$. We now summarize Owens' result and the necessary background information.

Firstly, recall that given a Seifert matrix $V$ for a knot $K$, the \textbf{signature} of $K$ is defined to be the signature of $V+V^T$. This is commonly denoted $\sigma(K)$, and it gives a well-defined and useful knot invariant. Remark 3 in \cite{Giller} (also see Proposition 2.1 in \cite{CL}) tells us that the behaviour of $\sigma(K)$ under crossing changes is well understood. Indeed, if $K'$ is obtained from $K$ by changing a positive crossing, then $\sigma(K')=\sigma(K)$ or $\sigma(K)+2$. Alternatively, if $K'$ is obtained from $K$ by changing a negative crossing, then $\sigma(K')=\sigma(K)$ or $\sigma(K)-2$. Note that this further implies the known fact that $u(K)\geq|\sigma(K)|/2$ for any knot $K$.

A positive definite matrix $Q\in GL_r(\Z)$ presents a group $\Gamma_Q$ via the short exact sequence
$$0\longrightarrow\Z^r\overset{Q}{\longrightarrow}\Z^r\longrightarrow\Gamma_Q\longrightarrow0.$$
Note that since $Q$ is positive-definite, it has rank $r$ and the group $\Gamma_Q$ will always be finite as it is a quotient of the free abelian group of rank $r$.

A \textbf{characteristic covector} for $Q$ is an element $E$ of $\Z^r$ which is congruent to the diagonal of $Q$ modulo 2. We denote the set of these by $\text{Char}(Q)=\{E\in\Z^r~|~E_i\equiv Q_{ii}~(\text{mod } 2)\}.$ Now define a function $m_Q:\Gamma_Q\to\Q$ by
$$m_Q(g)=\min\left\{\frac{E^TQ^{-1}E-r}{4}~|~E\in\text{Char}(Q),~[E]=g\right\}.$$
Since $Q$ is positive definite, this minimum must exist as the quantity $E^TQ^{-1}E$ is always non-negative. It is now possible to state Theorem 1 of \cite{Owens}:

\begin{theorem}
	[\cite{Owens}, Theorem 1]
	
	Let $K$ be a knot which may be unknotted by changing $p$ positive and $n$ negative crossings, where $n=\sigma(K)/2$ and $p+n=2$. Then there exists a positive-definite matrix
	$$Q=\bmat{m_1&1&a&0\\1&2&0&0\\a&0&m_2&1\\0&0&1&2},$$
	with $\det Q=\det K$, $0\leq a<m_1\leq m_2$, and exactly $n$ of $\{m_1,m_2\}$ are even; and a group isomorphism $\phi:\Gamma_Q\to\textup{Spin}^c(\Sigma(K))$ such that for all $g\in\Gamma_Q$,
	$$m_Q(g)\geq d(\Sigma(K), \phi(g)) ~~~~~~~~\text{ and }~~~~~~~~ m_Q(g)\equiv d(\Sigma(K), \phi(g))~(\text{mod } 2).$$
	
	\label{thm:owens}
\end{theorem}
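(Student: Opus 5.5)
We would prove this the way Owens does, using Ozsváth–Szabó's theorem relating the $d$-invariants of a $3$-manifold that bounds a negative-definite $4$-manifold to the characteristic covectors of its intersection form. The first step is geometric. Suppose $K$ can be unknotted by changing $p$ positive and $n$ negative crossings, with $p+n=2$ and $n=\sigma(K)/2$. By Montesinos' trick, each crossing change corresponds to a half-integral surgery on a knot in the double branched cover. Running the crossing changes in reverse, starting from the unknot, exhibits $\Sigma(K)$ as surgery on a two-component link $L\subset S^3$ with framings $\pm m_i/2$. The framing sign is determined by the sign of the crossing. The framing parity is tied to how the determinant changes, and hence, through the Goeritz/signature relation, to which crossings are "signature-changing."

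Next, we turn each half-integral framing $m_i/2$ into integral surgery on $L$ together with a $2$-framed meridian, using a slam-dunk. This produces a four-component framed link. Its linking matrix has exactly the displayed shape: diagonal entries $m_1,2,m_2,2$, meridian linkings $1$, and $a=\operatorname{lk}(L_1,L_2)$. Reducing $a$ modulo $m_1$ by handle slides and reordering gives $0\le a<m_1\le m_2$.

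The hypothesis $n=\sigma(K)/2$ enters through the sign rules for $\sigma$ recalled above. Since $\sigma$ must drop by $2$ at each negative change and be unchanged at each positive one, the corresponding framings are forced to have the right signs. As a result, the trace $X$ of the surgery, or its orientation reversal, is a positive-definite $4$-manifold with $\partial X=\Sigma(K)$ under the orientation convention of \cite{Owens}. The parity condition "exactly $n$ of $m_1,m_2$ are even" also comes from this signature bookkeeping: a change that alters $\sigma$ corresponds to an even coefficient. The condition $\det Q=\det K$ holds because $|H_1(\Sigma(K))|=\det K$ and $H_1(\partial X)=\Gamma_Q$, since $X$ is built with $2$-handles only.

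Finally, we apply Ozsváth–Szabó's inequality. For a (here positive-definite, after reorientation) $X$ with $b_1=0$ bounding a rational homology sphere $Y$, every $\mathfrak s\in\mathrm{Spin}^c(X)$ satisfies
$$\frac{c_1(\mathfrak s)^2-b_2(X)}{4}\ \ge\ d(Y,\mathfrak s|_Y),$$
and the two sides agree modulo $2$. Characteristic covectors $E$ correspond to $c_1(\mathfrak s)$, and $c_1^2$ is computed by $E^TQ^{-1}E$. The natural identification $\Gamma_Q\cong H^2(Y)\cong\mathrm{Spin}^c(Y)$, composed with restriction, gives $\phi$. Minimizing over all $E$ in a class $g$ yields $m_Q(g)\ge d(\Sigma(K),\phi(g))$. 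The congruence modulo $2$ holds for every $E$, and in particular for the minimizer.

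The main obstacle is the second step: getting the signs and parities right. We must check that the condition $n=\sigma(K)/2$ forces definiteness with the correct orientation, rather than an indefinite form, and match the parity of each $m_i$ to the sign of its crossing. We would handle this with Cochran–Lickorish's analysis of $\sigma$ under crossing changes together with the Goeritz-matrix description of $\Sigma(K)$.
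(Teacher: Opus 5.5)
The paper gives no proof of this statement; it quotes it from Owens. Your outline has the same architecture as Owens' argument: Montesinos trick, slam-dunk to a $2$-handlebody $X$ with form $Q$, definiteness, then the Ozsv\'ath--Szab\'o inequality and congruence. Your last step is fine. $X$ is a $2$-handlebody, so $b_1(X)=0$ and $\Gamma_Q\cong H^2(\partial X)$. Since $\det Q$ is odd, classes of characteristic covectors modulo $2Q\Z^4$ correspond bijectively to $\Gamma_Q$, so the inequality and congruence pass to $m_Q$.

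\textbf{The gap.} The step you call the main obstacle is the real content of the theorem, and you do not supply it. The rules for how $\sigma$ changes under crossing changes are statements about knots. Under $n=\sigma(K)/2$ they show that, in the unknotting direction, every negative change lowers $\sigma$ by $2$ and every positive one fixes it. They say nothing about the surgery picture. Write $\Lambda$ for the half-integral framing matrix, with $\Lambda_{ii}=D_i/2$, $D_i=2m_i-1$ and $\Lambda_{12}=a$. Then $Q\otimes\Q\cong\langle 2\rangle\oplus\langle 2\rangle\oplus\Lambda$. What you need is a refined Montesinos theorem: for $\Sigma(K)$ with its given orientation, $\sigma(\Lambda)=\sigma(K)+p-n$. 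Equivalently, each one-crossing cobordism has signature fixed by the crossing sign and the jump in $\sigma$, combined with Novikov additivity. Only then does $n=\sigma(K)/2$ force $\Lambda$, and hence $Q$, to be positive definite. This requires a genuine $4$-dimensional or Goeritz/Gordon--Litherland computation, and it is the content of Owens' refinement of the Montesinos trick. Deferring it to ``Cochran--Lickorish's analysis'' leaves the theorem unproved.

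\textbf{Your substitute mechanism fails.} Signs of the framings $D_i/2$ of $C_i\subset S^3$ cannot give definiteness. That also needs $D_1D_2>4a^2$, and nothing in your sketch controls the linking number; for example $D_1=D_2=1$, $a=1$ is indefinite. Nor is the framing sign determined by the crossing sign. The crossing sign fixes only $D_i \bmod 4$: $1$ for positive, $3$ for negative. Both the right-handed trefoil and the figure-eight are unknotted by changing a positive crossing, yet they give $D=-3$ and $D=+5$ respectively. (The branched cover of the right-handed trefoil bounds the negative-definite $A_2$ Milnor fibre.) The mod-$4$ fact, combined with $D_i>0$, is what yields ``exactly $n$ of $m_1,m_2$ even.'' Also, you cannot pass to $-X$ at will: that replaces $\Sigma(K)$ by $-\Sigma(K)$ and negates every $d$-invariant.

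\textbf{Reducing $a$.} You cannot reduce $a$ modulo $m_1$ while keeping the shape of $Q$. Let $x_1,y_1,x_2,y_2$ be the basis in which $Q$ is written. The classes in $\mathrm{span}(x_1,y_1)$ orthogonal to both $2$-framed classes are the multiples of $v=2x_1-y_1$. The change $x_2\mapsto x_2+kv$ sends $a\mapsto a+k(2m_1-1)$ and $m_2\mapsto m_2+4ka+2k^2(2m_1-1)$, which preserves parity. So you reduce to $|a|\le m_1-1$, flip a sign, reorder, and iterate. This terminates because $(2m_1-1)(2m_2-1)=\det K+4a^2$ strictly decreases.
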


When $\Sigma$ is a lens space, the following proposition of Ozsv\'ath and Szab\'o allows for a simple computation of the correction terms used above. 

\begin{proposition}
    [\cite{ozsz}, Proposition 4.8] Let $\Sigma = L(p,q)$ be a lens space. The correction terms of $\text{Spin}^c(\Sigma)$, with a canonical cyclic ordering, may be computed recursively using the following formula
    \begin{align*}
        \text{Base: } d(L(p,0), i) = 0 \\
        d(L(p,q), i) = \left(\frac{pq - (2i + 1 - p - q)^2}{4pq} \right) - d (L(q,r), j),
    \end{align*}
    where $i$ is the index of the ordering, $r = p \mod q$, and $j = i \mod q$.
    \label{prop:ozsz}
\end{proposition}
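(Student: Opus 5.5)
This is the Ozsváth–Szabó recursive formula for correction terms of lens spaces. The plan is to derive it from the surgery description of $L(p,q)$ together with the surgery formula for correction terms.

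\emph{Step 1 (surgery description).} I would view $L(p,q)$, with the orientation convention fixed above, as obtained by $p/q$ surgery on the unknot. Equivalently, it is the boundary of the negative- (or positive-) definite plumbing on the linear chain given by the continued fraction $p/q=[a_1,\dots,a_k]$.

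The recursion $p\mapsto q\mapsto r=p \bmod q$ is the Euclidean algorithm. It matches peeling off the first vertex of the chain: if $p/q=a_1-1/(q/r)$, then the rest of the chain gives $L(q,r)$.

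\emph{Step 2 (key input).} The key input is the rational surgery formula for the $d$-invariant of surgery on the unknot, proved by Ozsváth–Szabó using the surgery exact triangle. Consider the cobordism $W$ from $L(q,r)$ (up to orientation reversal) to $L(p,q)$ given by attaching one 2-handle. For a lens space every such cobordism has $b_2^\pm$ appropriate and induces an isomorphism on the relevant $HF^\infty$ towers. Lens spaces are L-spaces, so the maps in the exact triangle force the grading shift to be realized exactly.

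This gives
$$d(L(p,q),i)=\frac{c_1(\mathfrak s)^2-2\chi(W)-3\sigma(W)}{4}\pm d(L(q,r),j),$$
where $\mathfrak s$ is the extension of the $i$-th spin$^c$ structure minimizing the grading shift. The sign in front of the second term is negative because of orientation reversal between the two ends.

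\emph{Step 3 (computation).} I would compute $c_1(\mathfrak s)^2$ explicitly. Label spin$^c$ structures by $i\in\{0,\dots,p+q-1\}$ using the canonical cyclic ordering, i.e. by the evaluation $2i+1-p-q$ of $c_1$ on the generator. The intersection form on the relevant class has square $pq$ in suitable rational units, which yields the term
$$\frac{pq-(2i+1-p-q)^2}{4pq}.$$
The restriction of $\mathfrak s$ to the other end is the class $j=i\bmod q$.

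The base case is $L(p,0)$, which we identify with $S^1\times S^2$ or $S^3$ in the normalization used. It has vanishing $d$ in the relevant torsion structure.

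\emph{Main obstacle.} The delicate step is the bookkeeping: verifying that the spin$^c$ labelling is compatible across the recursion (that $i\mapsto i\bmod q$ is correct), and that the extension chosen realizes the maximal grading. I would check this against small cases such as $L(2,1)$ and $L(3,1)$, which have known $d$-invariants.
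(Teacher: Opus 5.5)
The paper does not prove this statement; it only cites Ozsv\'ath--Szab\'o, and the substance of that cited result is exactly the step your proposal leaves open. Your frame is right: a negative-definite $2$-handle cobordism $W$ with $b_2(W)=1$ between $L(q,r)$ and $L(p,q)$ (suitably oriented), the grading shift $\frac{c_1(\mathfrak s)^2-2\chi(W)-3\sigma(W)}{4}$, and the inequality $d(Y_2,\mathfrak s|_{Y_2})\ge \frac{c_1(\mathfrak s)^2+1}{4}+d(Y_1,\mathfrak s|_{Y_1})$ coming from the $HF^\infty$ isomorphism.

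\textbf{The gap.} The claim that does the real work is equality for \emph{every} $i$, using the \emph{specified} extension. You only assert it, and the reason you give does not deliver it. Rank counting in the $\widehat{HF}$ surgery triangle (all three terms are L-spaces, of orders $q$, $p$, $p-q$) shows only that the total map $\widehat F_W=\sum_{\mathfrak s}\widehat F_{W,\mathfrak s}$ out of $\widehat{HF}(L(q,r))$ is injective. That guarantees, for each of the $q$ spin$^c$ structures on the smaller lens space, that \emph{some} summand is nonzero. So it certifies equality for no more than $q$ of the $p$ spin$^c$ structures on $L(p,q)$, and it gives no control over which extension is the sharp one.

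The proposition needs more. For each $i$, the extension $\mathfrak s_i$ with $\langle c_1(\mathfrak s_i),[\Sigma]\rangle=2i+1-p-q$ (where $[\Sigma]$ generates $H_2(W)$ and $[\Sigma]^2=-pq$) must restrict to $j=i\bmod q$ and induce a map that is nonzero on the bottom of the tower. Checking $L(2,1)$ and $L(3,1)$ cannot establish this, since there $q=1$ and the bookkeeping is vacuous. What is missing is a direct proof that each summand $\widehat F_{W,\mathfrak s_i}$ is nonzero. One way is a genus-one Heegaard triple for $W$ in which $\beta$ and $\gamma$ meet once, so the third end is $S^3$:
\begin{itemize}
\item holomorphic triangles are then embedded triangles in the universal cover;
\item each of the $p$ generators of $\widehat{CF}(L(p,q))$ is a corner of exactly one triangle missing the basepoint, and that triangle's other corner is the generator indexed by $i\bmod q$;
\item the domain of that triangle gives the $c_1$ evaluation.
\end{itemize}
This settles sharpness and the labelling $j=i\bmod q$ at the same time.

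\textbf{Smaller points to fix.}
\begin{itemize}
\item Citing ``the rational surgery formula for the $d$-invariant of surgery on the unknot'' as the key input is circular. The $d$-invariants of $p/q$-surgery on the unknot are exactly the lens-space invariants being computed, and the general rational surgery formulas take them as input.
\item With $r=p\bmod q$, the identity $p/q=a_1-1/(q/r)$ is false. The correct step is $p/q=\lceil p/q\rceil-1/\bigl(q/(q-r)\bigr)$, so the truncated chain bounds $L(q,q-r)\cong -L(q,r)$. This orientation reversal is precisely the source of the minus sign in front of the second term.
\item The generator of $H_2(W)$ has square $-pq$, not $pq$. That sign is what turns $\frac{c_1^2+1}{4}$ into $\frac{pq-(2i+1-p-q)^2}{4pq}$.
\item Since $\gcd(p,q)=1$, the recursion always terminates at $L(1,0)=S^3$; $S^1\times S^2$ never occurs.
\end{itemize}
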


It is worth noting here that the canonical cyclic ordering of $i$ gives the group structure of $\text{Spin}^c(\Sigma)\cong H^2(\Sigma)$ up to a shift and potential sign correction (see \cite[Proposition 4.2]{ozsz} and the proof of Lemma 2.1 in \cite{owens:strle}). As two-bridge knots are double branch covered by lens spaces (see \cite[Satz 6]{schubert1956knoten} or \cite[Proposition 12.3]{lenscover}), this proposition may be combined with Theorem \ref{thm:owens} to yield Theorems \ref{thm:newuk} through \ref{thm: writhe-restriction} for all of the two-bridge knots listed. 

If $K$ is a general alternating knot, Owens \cite[Theorem 4.1]{Owens} uses a method due to Ozsv\'ath and Szab\'o \cite[Proposition 3.2]{ozsz:unknotting} for computing the correction terms using the positive definite Goeritz matrix of an alternating diagram of $K$. This method combined with Theorem \ref{thm:owens} gives the lower bound for all other alternating knots listed in Theorems \ref{thm:newuk} through \ref{thm: writhe-restriction}.

Our worked example highlights the two-bridge case as computing the correction terms via the Goeritz matrix is well documented in \cite[Theorem 4.1, Corollary 2]{Owens} and \cite[Proposition 3.2]{ozsz:unknotting}.

\hfill

\noindent\textbf{Proof of Theorems \ref{thm:newuk} through \ref{thm: writhe-restriction} :} We provide the details for $12a796=K(57,11)$. Note that a similar procedure can be used to obstruct unknotting number 2 for all of the knots listed. In the case of Theorem \ref{thm: writhe-restriction} we obstruct unknotting with $p = 1,~n = 1$ crossing changes. As all of those knots have signature 2, we are left with $n = 2,~p = 0$ being the only possible unknotting configuration. Explicit details of these obstructions may be found at \cite{Data}.

Let $K=12a796$. One can easily compute that $\sigma(K)=4$, so it follows that $u(K) \geq 2$. Moreover, notice that $K$ may be unknotted by performing the three crossing changes indicated in Figure \ref{fig:12a}. This tells us that $u(K) \in \{2,3\}$.

\begin{figure}[ht]
    \centering
    \includegraphics[width=0.4\linewidth]{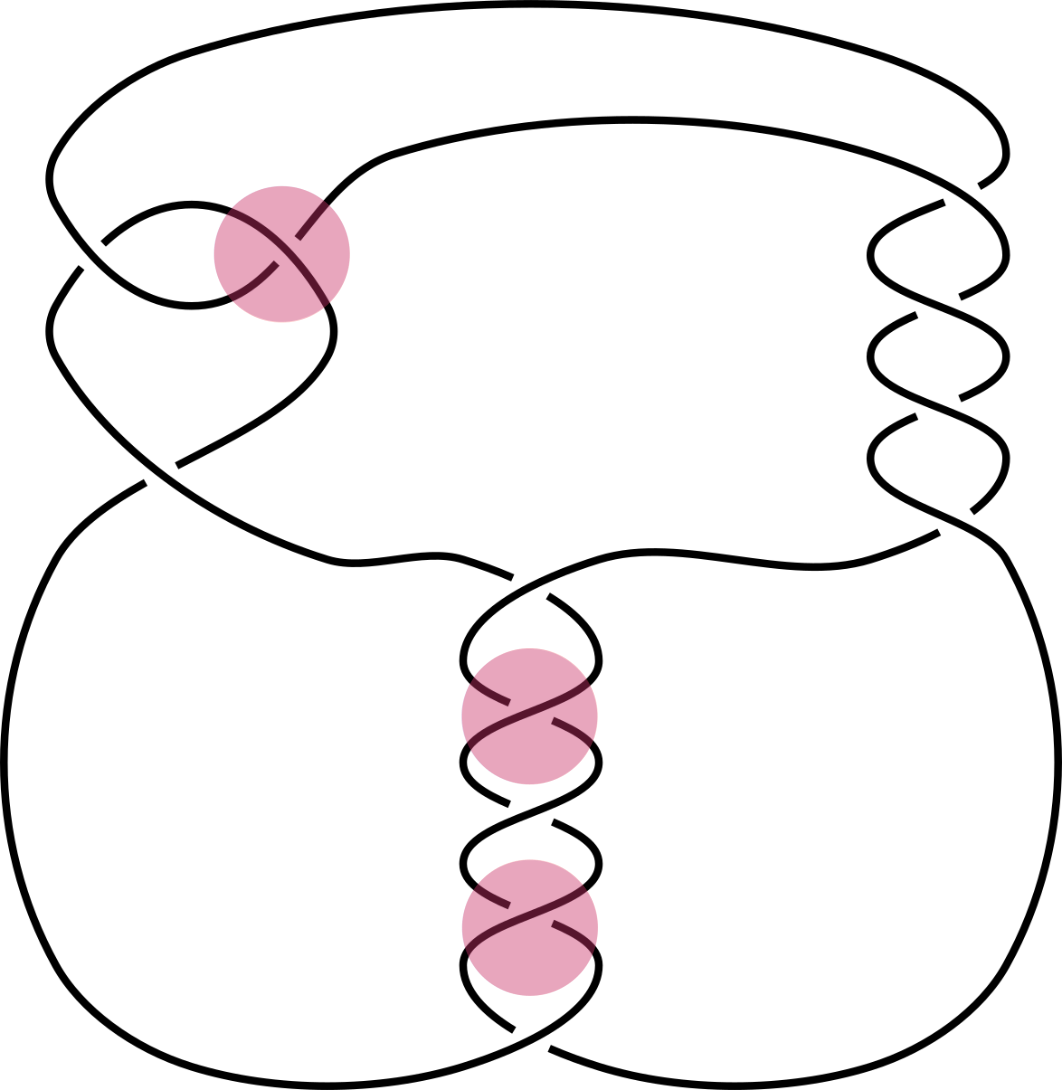}
    \caption{$u(12a796)\leq3$.}
    \label{fig:12a}
\end{figure}

Now suppose that $u(K)=2$ and note that if this were the case, then both crossing changes would need to be on negative crossings so that we end with signature 0. Hence, it must be that $K$ is unknotted by $p=0$ positive crossing changes and $n=2$ negative crossing changes, so we satisfy the hypotheses of Theorem \ref{thm:owens}.

Recall that the two-bridge knot $K(p,q)$ is double-branch covered by the lens space $L(p,q)$. In our case, we need to look at $L(57,11)$. By Proposition \ref{prop:ozsz}, or via a Goeritz matrix of $K$, we compute that the correction terms for Spin$^c(L(57,11))$ with respect to the group structure are as follows
\begingroup
\renewcommand*{\arraystretch}{1.5}
    $${ d= \left\{\begin{array}{rrrrrrrrrrrrr} -1 & \red{-\frac{83}{57}} & \red{-\frac{47}{57}} & -\frac{21}{19} & \red{-\frac{17}{57}} & \red{-\frac{23}{57}} & \frac{11}{19} & \frac{37}{57} & -\frac{11}{57} & \frac{1}{19} & \red{-\frac{35}{57}} & -\frac{11}{57} & -\frac{13}{19} \\ -\frac{5}{57} & \red{-\frac{23}{57}} & \frac{7}{19} & \frac{13}{57} & \frac{67}{57} & \frac{23}{19} & \frac{1}{3} & \frac{31}{57} & -\frac{3}{19} & \frac{13}{57} & \red{-\frac{17}{57}} & \frac{5}{19} & -\frac{5}{57} \\ \frac{37}{57} & \frac{9}{19} & \frac{79}{57} & \frac{79}{57} & \frac{9}{19} & \frac{37}{57} & -\frac{5}{57} & \frac{5}{19} & -\frac{17}{57} & \frac{13}{57} & -\frac{3}{19} & \frac{31}{57} & \frac{1}{3} \\ \frac{23}{19} & \frac{67}{57} & \frac{13}{57} & \frac{7}{19} & -\frac{23}{57} & -\frac{5}{57} & -\frac{13}{19} & -\frac{11}{57} & -\frac{35}{57} & \frac{1}{19} & -\frac{11}{57} & \frac{37}{57} & \frac{11}{19} \\ -\frac{23}{57} & -\frac{17}{57} & -\frac{21}{19} & -\frac{47}{57} & -\frac{83}{57} \end{array}\right\} }$$
    \endgroup 

Since $12a796$ has determinant 57, we now seek all matrices $Q$ of the form specified in Theorem \ref{thm:owens} with determinant 57. This is equivalent to finding all integer solutions to the equation
\begin{equation}
    (2m_1-1)(2m_2-1)=57+4a^2,
\end{equation}
which satisfy $0\leq a<m_1\leq m_2$ and $m_1,m_2$ both even. First suppose that $a\geq15$. Then since $m_2\geq m_1\geq a+1$, we find by (1) that $57+4a^2\geq(2a+1)^2$, which implies that $57\geq4a+1$. But this is impossible since $4a+1>60$, so there are no solutions when $a\geq15$. Note additionally that we have no solutions when $57+4a^2$ is prime, since the only way that $(2m_1-1)(2m_2-1)$ can be prime is if $m_1=1$ or $m_2=1$. Neither is possible since $m_1$ and $m_2$ must be even. Now consider $0\leq a\leq 14$, and note that $57+4a^2$ has integer sequence
$$3\cdot19,~61,~73,~3\cdot31,~11^2,~157,~3\cdot67,~11\cdot23,~313,~3\cdot127,~457,~541,~3\cdot211,~733,~29^2.$$
One can check that $57+4a^2$ is composite only when $a\in\{0,3,4,6,7,9,12,14\}$. Hence, there are no solutions outside of these $a$ values. Moreover, $a\notin\{3,6,9,12\}$ by the prime factorizations of $57+4a^2$ in these cases, all of which require $m_1=2<a$. Similarly, $a\neq7$ since that requires $m_1=6<a$. Lastly, $a\neq14$ since this requires $m_1=m_2=15$, which is not even. Hence, $a=0, m_1=2, m_2=10$ and $a=4,m_1=m_2=6$, are the only solutions, and
$$Q_1=\bmat{2&1&0&0\\1&2&0&0\\0&0&10&1\\0&0&1&2},\hspace{50pt}Q_2=\bmat{6&1&4&0\\1&2&0&0\\4&0&6&1\\0&0&1&2}$$
are the only possible matrices from Theorem \ref{thm:owens}.
In both cases, we find that $\Gamma_{Q}\cong\Z_{57}$, and we compute $m_{Q_1}$ and $m_{Q_2}$ as
\begingroup
\renewcommand*{\arraystretch}{1.5}
$${ 	 m_{Q_1}=
 \left\{\begin{array}{rrrrrrrrrrrrr} -1 & -\frac{13}{57} & \frac{5}{57} & -\frac{1}{19} & \frac{77}{57} & \frac{131}{57} & \frac{15}{19} & \frac{47}{57} & \frac{23}{57} & -\frac{9}{19} & \frac{11}{57} & \frac{23}{57} & \frac{3}{19} \\ \frac{83}{57} & \frac{131}{57} & \frac{13}{19} & \frac{35}{57} & \frac{5}{57} & -\frac{17}{19} & -\frac{1}{3} & -\frac{13}{57} & -\frac{11}{19} & \frac{35}{57} & \frac{77}{57} & \frac{31}{19} & \frac{83}{57} \\ \frac{47}{57} & -\frac{5}{19} & \frac{11}{57} & \frac{11}{57} & -\frac{5}{19} & \frac{47}{57} & \frac{83}{57} & \frac{31}{19} & \frac{77}{57} & \frac{35}{57} & -\frac{11}{19} & -\frac{13}{57} & -\frac{1}{3} \\ -\frac{17}{19} & \frac{5}{57} & \frac{35}{57} & \frac{13}{19} & \frac{131}{57} & \frac{83}{57} & \frac{3}{19} & \frac{23}{57} & \frac{11}{57} & -\frac{9}{19} & \frac{23}{57} & \frac{47}{57} & \frac{15}{19} \\ \frac{131}{57} & \frac{77}{57} & -\frac{1}{19} & \frac{5}{57} & -\frac{13}{57} \end{array}\right\}
 } \\ $$

 $${ 	
 m_{Q_2} = \left\{\begin{array}{rrrrrrrrrrrrr} -1 & -\frac{35}{57} & \frac{31}{57} & \frac{9}{19} & \frac{67}{57} & \frac{37}{57} & \frac{17}{19} & -\frac{5}{57} & -\frac{17}{57} & \frac{5}{19} & -\frac{23}{57} & -\frac{17}{57} & \frac{11}{19} \\ \frac{13}{57} & \frac{37}{57} & -\frac{3}{19} & -\frac{11}{57} & \frac{31}{57} & \frac{1}{19} & \frac{1}{3} & -\frac{35}{57} & -\frac{15}{19} & -\frac{11}{57} & \frac{67}{57} & \frac{25}{19} & \frac{13}{57} \\ -\frac{5}{57} & \frac{7}{19} & -\frac{23}{57} & -\frac{23}{57} & \frac{7}{19} & -\frac{5}{57} & \frac{13}{57} & \frac{25}{19} & \frac{67}{57} & -\frac{11}{57} & -\frac{15}{19} & -\frac{35}{57} & \frac{1}{3} \\ \frac{1}{19} & \frac{31}{57} & -\frac{11}{57} & -\frac{3}{19} & \frac{37}{57} & \frac{13}{57} & \frac{11}{19} & -\frac{17}{57} & -\frac{23}{57} & \frac{5}{19} & -\frac{17}{57} & -\frac{5}{57} \\ \frac{17}{19} & \frac{37}{57} & \frac{67}{57} & \frac{9}{19} & \frac{31}{57} & -\frac{35}{57} \end{array}\right\} }$$
 
\endgroup

where the orders of these lists are determined by the ordering of $\Z_{57}$ with generator $\gamma$ (i.e. the $i$th entry is $m_Q(\gamma^i)$).

To contradict Theorem \ref{thm:owens}, we must show in both cases that there is no isomorphism $\phi:\Gamma_Q\to\text{Spin}^c(\Sigma(K))$ with the specified properties. In particular, we will show that for all possible $\phi$, there exists some $g\in\Gamma_Q$ such that $m_Q(g)<d(\Sigma(K), \phi(g))$. 

As both $\Gamma_Q$ and $\text{Spin}^c(\Sigma(K))$ are cyclic groups of order $57$, we may instead think of $\phi$ as an automorphism from $\Z_{57}$ to itself, and thus multiplication by some some integer $x$ coprime to $57$. Furthermore, the palindromic nature of $d : \Z_{57} \rightarrow \Q$ tells us if multiplication by $x$ is obstructed, then multiplication by $-x$ is as well. Thus we may also assume $x \leq 57/2$.

Let us first obstruct $Q_1$. As $m_{Q_1} (1) = -\frac{13}{57}$, we need $x$ so that $d(x) \leq -\frac{13}{57}$ and differs from $-\frac{13}{57}$ by a multiple of 2. The \red{highlighted $d$} values above correspond to generators with $d$ values less than or equal to $-\frac{13}{57}$. However, each potential generator fails the mod $2$ condition, so $Q_1$ is obstructed, and we move to $Q_2$.

In the case of $Q_2$ we may not obstruct all $\phi$ as easily as multiplication by $10$ gives an isomorphism with $m_{Q_2}(1) = -\frac{35}{57} = d(10)$. We may now go further through the relevant $m_{Q_2}$ and $d$ values of the isomorphism until this isomorphism is obstructed. The list below offers such an obstruction as $m_{Q_2}(20) = -\frac{35}{57} < d(200 \equiv 29) = \frac{79}{57}$.

\begin{verbatim}
    mQ2[i]  =   [-35/57, 31/57, 9/19, 67/57, 37/57, 17/19, -5/57, -17/57, 5/19, -23/57,
                 -17/57, 11/19, 13/57, 37/57, -3/19, -11/57, 31/57, 1/19, 1/3, -35/57]

d[10i mod 57] = [-35/57, 31/57, 9/19, 67/57, 37/57, -21/19, -5/57, -17/57, 5/19, -23/57, 
                 -17/57, 11/19, 13/57, 37/57, -3/19, -11/57, -83/57, 1/19, 1/3, 79/57]               
\end{verbatim}

In either case we have all possible isomorphisms obstructed and so $u(12a796) = 3$ as desired.
\qed

\begin{remark}
    In \cite{brittenham}, Brittenham showed the knots 11a237, 11a337, and 11a359 have $u(K)=3$ by showing that they are 1 crossing change away from 11a365, which has $u(K)=4$ as per \cite{Owens}. He also showed 11a361 has $u(K) \in \{3,4\}$ for the same reason. Our computations give these results as a direct application of Owens' Theorem 1 in that paper.
\end{remark}

\section*{Appendix and Python functions}
We give a SageMath \cite{sagemath} and SnapPy \cite{SnapPy} implementation of the process described above for a given two-bridge or alternating knot. A full notebook to use with KnotInfo data is available at \cite{Data}.
{\tiny
\begin{verbatim}
import itertools
from snappy import Link

All_Qs = dict()
Knot_Names = dict()

def load_pickle_file(file_path):
    with open(file_path, 'rb') as file:
        return pickle.load(file)

def dump_pickle_file(file_path, file_data):
    with open(file_path,"wb") as file:
        pickle.dump(file_data, file)
    return True
    
def d_invariant_of_two_bridged_knots(p, q):
    def d(p, q, i):
        return 0 if q == 0 else 1/4 - 1/4 * (2 * i + 1 - p - q)^2 / (p*q) - d(q, p % q, i %q)
    original_dlist = [d(p, q, i) for i in range(0, p)]
    # Rotate the resulting list to align with the cyclic group structure
    k = (q-1)/2 if q%2 == 1 else (p+q-1)/2
    dlist = vector(original_dlist[k:] + original_dlist[:k])
    return dlist if dlist[0] < 0 else -1 * dlist

def convert_lattice_to_group_word(lattice_element, group):
    gens = group.gens()
    group_element = gens[0]^lattice_element[0]
    for i in range(1, len(lattice_element)):
        group_element *= gens[i]^lattice_element[i]
    return group_element

def quotient_by_lattice(Q):
    r = rank(Q)
    G = FreeGroup(rank(Q))
    gens = G.gens()
    commutators = [gens[i]*gens[j]*gens[i]^(-1)*gens[j]^(-1) for i in range(r-1) for j in range(i+1, r)]
    lattice_relation_vectors = (Q).columns()
    return G.quotient(commutators + [convert_lattice_to_group_word(x,G) for x in lattice_relation_vectors])

def get_characteristic_covectors(Q):
    each_component_options = [list(range(-Q[i][i], Q[i][i]-1, 2)) for i in range(rank(Q))]
    return itertools.product(*each_component_options)
                   
def make_intersection_form(m2, m1, a):
    return matrix([[m1, 1, a, 0], [1, 2, 0, 0], 
                   [a, 0, m2, 1], [0, 0, 1, 2]])

def find_all_possible_intersection_forms(detK, n = 2):
    possibleQ = []
    for a in range(0, detK//4 + 1):
        for k1 in divisors(detK + 4 * a^2):
            k2 = (detK + 4 * a^2)/k1
            if k1 <= k2 and sum([x%4 for x in [k1,k2]]) == 3 * n + 1 * (2-n):
                m1 = (k1 + 1)/2
                m2 = (k2 + 1)/2
                if a < m1:
                    possibleQ += [make_intersection_form(m2, m1, a)]
    return possibleQ

def get_group_element(covector, Q):
    group = quotient_by_lattice(Q)
    return convert_lattice_to_group_word(covector, group)

def get_dlist_Q(Q):
    unique_char_cvectors = []
    generators = []
    for char_covector in get_characteristic_covectors(Q):
        m_covector = matrix(char_covector)
        current_d_value = (m_covector * Q^(-1) * m_covector.transpose() - rank(Q))/4
        current_group_element = get_group_element(char_covector, Q)
        if current_group_element.order() == det(Q):
            generators += [current_group_element]
        unique = True
        for i in range(len(unique_char_cvectors)):
            (group_element, d_value) = unique_char_cvectors[i]
            if current_group_element == group_element:
                unique = False
                unique_char_cvectors[i] = (group_element, min([d_value, current_d_value]))
                break
        if unique:
            unique_char_cvectors += [(current_group_element, current_d_value)]
    new_dlist = []
    if len(generators) > 0:
        generator = generators[0]
        for i in range(det(Q)):
            current_group_element = generator^i
            for (group_element, d_value) in unique_char_cvectors:
                if group_element == current_group_element:
                    new_dlist += [d_value[0][0]]
                    break
    return new_dlist

def pretty_print(original_dlist):
    new_list = []
    length = len(original_dlist)
    for i in range(length//15 + 1):
        new_list += [[]]
        for j in range(15):
            pointer = i*15 + j
            if pointer >= length:
                new_list[i] += [0]
            else:
                new_list[i] += [original_dlist[pointer]]
    print(matrix(new_list))

def obstruct(dlist, Q):
    # Note if the group is cyclic we may assume an automorphism of Z_p
    Q_dlist = get_dlist_Q(Q)
    p = Integer(det(Q))
    if len(Q_dlist) != p:
        print ("Obstructed by Z^n/Q not being cyclic")
        return True
  
    (shift_for_simplicity, _) = find_minimal_generator(Q_dlist)
    isomorphic_Q_dlist = vector(isomorphic_dlist(Q_dlist, shift_for_simplicity))
    pretty_print(isomorphic_Q_dlist)
    
    for possible_isomorphism_shift in p.coprime_integers((p+1)/2):
        isomorphism_attempt_d_values = []
        isomorphism = vector(isomorphic_dlist(dlist, possible_isomorphism_shift))
        for group_index in range(1, p):
            isomorphism_attempt_d_values += [isomorphism[group_index]]
            difference_in_values = isomorphism[group_index] - isomorphic_Q_dlist[group_index]
            obs_diff_mod_2 = not (difference_in_values / 2).is_integer()
            if isomorphism[group_index] > isomorphic_Q_dlist[group_index] or obs_diff_mod_2:
                break
        if len(isomorphism_attempt_d_values) < p - 1:
            if len(isomorphism_attempt_d_values) > 1:
                print(list(isomorphic_Q_dlist[1: len(isomorphism_attempt_d_values) + 1]))
                print(possible_isomorphism_shift, isomorphism_attempt_d_values)
        else:
            print("Unobstructed")
            print(possible_isomorphism_shift)
            return False
    return True

def find_minimal_generator(MCQ_list):
    p = Integer(len(MCQ_list))
    gens = [(i, MCQ_list[i]) for i in p.coprime_integers(p)]
    return min(gens, key = lambda generator: generator[1])

def isomorphic_dlist(dlist, shift):
    p = len(dlist)
    assert gcd(shift, p) == 1
    return [dlist[i*shift % p] for i in range(p)]

def obstruct_alternating(name, expected_signature = 4):
    print(f"{name}")
    L = Link(name)
    assert L.signature() in [-expected_signature, expected_signature]
    assert L.is_alternating()
    signature = abs(L.signature())
    G = L.goeritz_matrix()
    G = G if G.is_positive_definite() else L.mirror().goeritz_matrix()
    assert G.is_positive_definite()
    detK = abs(det(G))
    dlist = vector(get_dlist_Q(G))
    if len(dlist) != detK:
        print("Not cyclic will skip for now")
        return
    if not dlist[0] in [-signature/4, signature/4]:
        print("Bad Goeritz matrix. Should check")
        return
    dlist = -dlist if dlist[0] == signature/4 else dlist # correct possible sign error 
    print(f"Correction terms")
    pretty_print(dlist)
    if detK in All_Qs:
        QS = All_Qs[detK]
    else:
        QS = find_all_possible_intersection_forms(detK, n = signature/2)
    All_Qs[detK] = QS
    unobstructed = False
    for Q in QS:
        print("Testing obstruction for")
        print(f"m1: {Q[0][0]}, m2: {Q[2][2]},a: {Q[0][2]}")
        unobstructed = not obstruct(dlist, Q)
        if unobstructed:
            break
    print("---------------------------")
    if not unobstructed:
        print(f"SUCCESS for name = {name}")
        Knot_Names[name] = True
    print("---------------------------")

def obstruct_twobridge(p,q, name):
    print(f"Obstructing p = {p} | q = {q}")
    dlist = d_invariant_of_two_bridged_knots(p,q)
    print(f"Correction terms")
    pretty_print(dlist)
    if p in All_Qs:
        QS = All_Qs[p]
    else:
        QS = find_all_possible_intersection_forms(p)
    All_Qs[p] = QS
    unobstructed = False
    for Q in QS:
        print("Testing obstruction for")
        print(f"m1: {Q[0][0]}, m2: {Q[2][2]},a: {Q[0][2]}")
        unobstructed = not obstruct(dlist, Q)
        if unobstructed:
            break
    print("---------------------------")
    if not unobstructed:
        print(f"SUCCESS for p = {p} | q = {q} | name = {name}")
        Knot_Names[name] = True
    print("---------------------------")
\end{verbatim}
}


\end{document}